\documentclass[12pt]{elsarticle}
\usepackage{float}
\usepackage[utf8]{inputenc}
\usepackage{upgreek}
\usepackage{amssymb}
\usepackage{graphicx}
\usepackage{epstopdf}
\usepackage{latexsym}
\usepackage{amsmath}
\usepackage{amssymb}
\usepackage{latexsym}
\usepackage{amsmath}
\usepackage{amsfonts}
\usepackage{amsfonts}
\usepackage{array}
\usepackage{microtype}
\usepackage{amsthm}
\usepackage{hyperref}
\newtheorem{definition}{\bf Definition}[section]
\newtheorem{lemma}[definition]{\bf Lemma}
\newtheorem{theorem}[definition]{\bf Theorem}

\newtheorem{corollary}[definition]{\bf Corollary}

\newtheorem{remark}[definition]{\bf Remark}
\newtheorem{example}[definition]{\bf Example}
\usepackage{hyperref}
\usepackage{xcolor}

\usepackage[symbol]{footmisc}
\def\correspondingauthor{\footnote{*Corresponding author.}}

\begin{document}

\begin{frontmatter}

%% Title, authors and addresses

\title{Korovkin-type approximation results through summability methods in the Applied Science}

%% use the tnoteref command within \title for footnotes;
%% use the tnotetext command for the associated footnote;
%% use the fnref command within \author or \address for footnotes;
%% use the fntext command for the associated footnote;
%% use the corref command within \author for corresponding author footnotes;
%% use the cortext command for the associated footnote;
%% use the ead command for the email address,
%% and the form \ead[url] for the home page:
%%
%% \title{Title\tnoteref{label1}}
%% \tnotetext[label1]{}
%% \author{Name\corref{cor1}\fnref{label2}}
%% \ead{email address}
%% \ead[url]{home page}
%% \fntext[label2]{}
%% \cortext[cor1]{}
%% \address{Address\fnref{label3}}
%% \fntext[label3]{}

%% use optional labels to link authors explicitly to addresses:
%% \author[label1,label2]{<author name>}
%% \address[label1]{<address>}
%% \address[label2]{<address>}
%\author{Fernando Le\'on-Saavedra}
%\address{ Department of Mathematics
%University of C\'adiz Avda. de la Universidad s/n %11402-Jerez de
%la Frontera. Spain.} \ead{fernando.leon@uca.es}

\author{M. del Carmen List\'an-Garc\'{\i}a}
\address{Department of Mathematics
University of C\'adiz, CASEM, Pol. R\'{\i}o San Pedro s/n, 11510-Puerto Real. Spain.} \ead{mariadelcarmen.listan@uca.es}

\author{Mar\'{\i}a Pilar Romero de la Rosa\correspondingauthor{*}}

\address{ Department of Mathematics
University of C\'adiz Avda. de la Universidad s/n 11403-Jerez de
la Frontera. Spain.}
\ead{pilar.romero@uca.es}

\begin{abstract}
We present  Korovkin approximation theorems that incorporate summability methods. These result allows us to obtain a unified treatment of  several previous results, focusing on the underlying structure and the properties that a summability method should satisfy in order to establish a Korovkin-type approximation result. As a by-product we obtain new Korovkin-type results incorporating summability methods, for example for  regular matrix convergence methods, convergence through ideals of natural numbers, etc, and we provide further directions and a preparation for questions which remain open.
\end{abstract}

\begin{keyword}
Korovkin-type approximation result, Summability method\sep inequalities, preservers \\ MSC \sep40H05  \sep   	40A35 
\end{keyword}

\end{frontmatter}

\section{Introduction}

Throughout this paper, $\mathbb{N}$ will denote the set of natural numbers. Let $X$ be a  Banach space.
A linear summability method in $X$ (or convergence method) will be denoted by $\mathcal{R}$; that is, ${\mathcal R}$ will be a linear map ${\mathcal R} : \mathcal{D}_{\mathcal R} \subset X^{\mathbb{N}}\longrightarrow X $ (here ${\mathcal D}_{{\mathcal R}}$ denotes the domain of ${\mathcal R}$). Thus, a sequence $(x_n)\in X^{\mathbb{N}}$ is ${\mathcal R}$-convergent to $L$ (and it will be denoted by $x_n\overset{\mathcal{R}}{\longrightarrow} L$ ) provided ${\mathcal R}((x_n)_{n\geq 1})=L$.

With the development of Fourier theory, other convergence methods of the series were studied which are interesting in their own right. Convergence methods have generated so much interest in Approximation Theory and Applied Mathematics that different monographs have appeared in the literature \cite{mursaleenbook}; moreover, this is a very active field of research with many contributors.

The Korovkin approximation theorem is one of the best known and most useful results in Approximation Theory, so beautiful that it has attracted the interest of many mathematicians and it has been strengthened and generalized in many directions. Some recent monographs by  F. Altomare  \cite{altomare,altomare2} evidence  this fact.
 One of the classical versions \cite{altomare2} states that if $L_n$ is a sequence of positive linear operators on the space of continuous functions $\mathcal{C}([0,1])$  endowed with the supremum norm into $\mathcal{C}([0,1])$, then for all $f\in\mathcal{C}[0,1]$, $\|L_nf-f\|_{\infty}\to 0$ if and only if $\|L_n(e_i)-e_i\|_\infty\to 0$, for only three continuous functions $e_1(x)=1,e_2(x)=x$ and $e_3(x)=x^2$.

A sequence $(x_n)\subset X$  is said to be statistically convergent to $L$ if for any $\varepsilon$ the subset $\{n\,:\, \|x_n-L\|>\varepsilon\}$ has zero density on $\mathbb{N}$. The term statistical convergence was first presented by Fast \cite{Fast} and Steinhaus \cite{Stein} independently in the same year 1951.
 Gadjiev and Orhan  \cite{orhan} obtained a remarkable  version of Korovkin's approximation theorem in terms of the statistical convergence. Gadjiev and Orhan's result  really improve the result of Korovkin: they exhibited examples of positive operators that do not satisfy the classical theorem with uniform convergence, but that  satisfy the statistical version. Since then, Gradiev-Orhan's research line have been extended  for different types of convergence methods (\cite{korov, b3,EMN,MA, MVEG}).
 
 A good source of problems consists in considering a classical result  which is true for the usual convergence and, to try to prove it, replacing the usual convergence by other convergence methods. 
In this way, it is possible to see a classical result from a new point of view. Sometimes, it is possible to characterize those summability methods for which these classical results hold (see  \cite{orlicz,schur}).

In this note, we aim to unify different versions of Korovkin’s result that incorporated summability methods. Thus, we survey what is  known about this topic, simplyfing  some of the existing proofs and providing a general view on the topic. Of course, Korovkin’s statements are not true for a general summability method; we analyze those summability methods for which Korovkin’s statements continue being true.  Specifically we will show up two properties on a summability method that provides a Korovkin-type approximation result, namely, when the summability method {\it preserves inequalities} or when {\it preserves order-inequalities}. 
Both concept fits a glove to different summability methods and it allow us to obtain several applications.
These  properties  are reminiscent of the squeeze theorem for sequences, as far we know they have been not studied for general summability methods and they deserve subsequent studies.

The paper is structured as follows. In Section \ref{main} we 
 introduce the notation and  we provide a general Korovkin-type result which is true for summability methods that preserve inequalities and order-inequalities. Next, we exhibit a summability method which no preserves inequalities and for which Korovkin's statement fails, that is, these conditions are sharp in some sense. 
 In Section \ref{aplicaciones} we show  several summability methods which are suitable for a Korovkin-type approximation result. Some of them were studied previously: for instance there are versions of Korovkin result, for the statistical convergence \cite{orhan} and the almost summability \cite{almost}. We show some new versions; for instance we provide Korovkin-type approximation results for the convergence method induced by a non-trivial ideal or by any regular matrix summability method with non-negative entries. In brief, we provide a systematic point of view on this topic. 
 The paper concludes with a brief section on concluding remarks and open questions.

\section{Main results}
\label{main}

Let  ${\mathcal R}$ be a summability method on  a Banach space $(X,\|\cdot\|)$. That is, the map $\mathcal{R}\,:\,\mathcal{D}_{\mathcal{R}}\subset  X^{\mathbb{N}}\to X$ assigns to a sequence $(x_n)_{n\geq 1}\in \mathcal{D}_{\mathcal{R}}$ a unique vector $L\in X$. 
To avoid bizarre situations, we will require on $\mathcal{R}$ that the limit assignment does not depend on the first terms, that is, for any  $(x_n)_{n\geq 1}\in \mathcal{D}_{\mathcal{R}}$ such that $\mathcal{R}((x_n)_{n\geq 1})=L$ then for any $n_0\in \mathbb{N}$, we have that $(x_n)_{n\geq n_0}\in \mathcal{D}_{\mathcal{R}}$ and 
$\mathcal{R}((x_n)_{n\geq n_0})=L$.

The following version of the squeeze (or sandwich) theorem will be useful for our objectives.

\begin{definition}
Let $\mathcal{R}\,:\, \mathcal{D}_{\mathcal{R}}\subset X^{\mathbb{N}}\to X$ be a summability method. Let us suppose that  the sequences $(w_n),(x_n),(y_n),(z_n)\in \mathcal{D}_{\mathcal{R}} $ satisfy
\begin{equation}
    \label{des}
\|w_n-w\|\leq C(\|x_n-x\|+\|y_n-y\|+\|z_n-z\|)
\end{equation}
 for some $w,x,y,z\in X$, some constant $C>0$ and for all $n\geq 1$. 
We will say that $\mathcal{R}$ {\sf preserves inequalities} if for any subsequences $(x_n),(y_n), (z_n)$ satisfying (\ref{des}) and satisfying
 $x_n\overset{\mathcal{R}}{\longrightarrow} x$, $y_n\overset{\mathcal{R}}{\longrightarrow} y$, $z_n\overset{\mathcal{R}}{\longrightarrow} z$, we have $w_n\overset{\mathcal{R}}{\longrightarrow} w$.
\end{definition}

The following notion will be also useful.  Let $(E,<)$ be a Banach lattice endowed with a lattice norm  $\|\cdot\|$. As usual, we denote $|x|:=x\vee -x$.
\begin{definition}
Assume that $(E,\leq)$ is a Banach lattice endowed with a lattice norm $\|\cdot\|$. Let $\mathcal{R}\,:\,\mathcal{D}_{\mathcal{R}}\subset X^{\mathbb{N}}\to X$ be a summability method. Assume that there exist sequences $w_n,x_n,y_n,z_n\in \mathcal{D}_{\mathcal{R}}$ satisfying
\begin{align}
\label{desi}
-C[(x_n-x)&+(y_n-y)+(z_n-z)]<w_n-w< \nonumber\\
&< C[(x_n-x)+(y_n-y)+(z_n-z)]
\end{align}
for some constant $C>0$. We will say that $\mathcal{R}$ is {\sf  preserves order-inequalities} if for any sequences $(x_n), (y_n), (z_n)$ satisfying (\ref{desi})  and satisfying  $x_n\overset{\mathcal{R}}{\longrightarrow} x$, $y_n\overset{\mathcal{R}}{\longrightarrow}y$,  $z_n\overset{\mathcal{R}}{\longrightarrow} z$, we have $w_n\overset{\mathcal{R}}{\longrightarrow} w$.
\end{definition}

\begin{theorem}
Let  $\mathcal{R}$ be a summability method on the Banach lattice of continuous function $\mathcal{C}[0,1]$ endowed with the supremum norm. Assume that $\mathcal{R}$ preserves inequalities or order-inequalities. If $(L_n)$ is a sequence of positive linar operators  from $\mathcal{C}([0,1])$ into $\mathcal{C}([0,1])$ then  for any $f\in\mathcal{C}([0,1])$ and bounded on $\mathbb{R}$, $L_nf\overset{\mathcal{R}}{\longrightarrow}f$ if and only if $L_n 1\overset{\mathcal{R}}{\longrightarrow}1$, $L_n t\overset{\mathcal{R}}{\longrightarrow}t$ and $L_n t^2\overset{\mathcal{R}}{\longrightarrow}t^2$.
\end{theorem}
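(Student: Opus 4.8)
The plan is to reduce everything to the classical Korovkin estimate and then let the preservation hypothesis carry out the passage to the limit. The forward implication costs nothing: assuming $L_nf\overset{\mathcal{R}}{\longrightarrow}f$ for every $f\in\mathcal{C}[0,1]$ and specializing to $f=1,t,t^{2}$ gives the three stated convergences at once. All the content is in the converse, so from now on I would assume $L_n1\overset{\mathcal{R}}{\longrightarrow}1$, $L_nt\overset{\mathcal{R}}{\longrightarrow}t$, $L_nt^{2}\overset{\mathcal{R}}{\longrightarrow}t^{2}$ and fix an arbitrary $f\in\mathcal{C}[0,1]$.

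First I would recover, for each $\varepsilon>0$, the usual quadratic Korovkin majorant. By uniform continuity there is $\delta>0$ with $|f(t)-f(x)|<\varepsilon$ whenever $|t-x|<\delta$, and since $|f(t)-f(x)|\le 2\|f\|_\infty\le \frac{2\|f\|_\infty}{\delta^{2}}(t-x)^{2}$ when $|t-x|\ge\delta$, one gets
\[
|f(t)-f(x)|\le \varepsilon+\frac{2\|f\|_\infty}{\delta^{2}}(t-x)^{2}\qquad(t,x\in[0,1]).
\]
Writing $\psi_x(t)=(t-x)^{2}=t^{2}-2xt+x^{2}$, applying the positive linear operator $L_n$ in the variable $t$, and using the splitting $L_nf(x)-f(x)=L_n(f-f(x))(x)+f(x)(L_n1(x)-1)$ together with positivity, I obtain the pointwise bound
\[
|L_nf(x)-f(x)|\le \varepsilon\,L_n1(x)+\frac{2\|f\|_\infty}{\delta^{2}}\,L_n\psi_x(x)+\|f\|_\infty\,|L_n1(x)-1|.
\]
Expanding $L_n\psi_x(x)=(L_nt^{2}(x)-x^{2})-2x(L_nt(x)-x)+x^{2}(L_n1(x)-1)$ and taking the supremum over $x\in[0,1]$ (where $|x|\le1$) yields
\[
\|L_nf-f\|_\infty\le \varepsilon+C_\varepsilon\big(\|L_n1-1\|_\infty+\|L_nt-t\|_\infty+\|L_nt^{2}-t^{2}\|_\infty\big)
\]
for a constant $C_\varepsilon$ depending only on $\varepsilon$, $\delta$ and $\|f\|_\infty$. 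This is the non-abstract part of the argument and is essentially the Gadjiev--Orhan computation.

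It is now tempting to read off the conclusion by taking $w_n=L_nf$, $w=f$, $x_n=L_n1$, $y_n=L_nt$, $z_n=L_nt^{2}$ with limits $1,t,t^{2}$ and invoking that $\mathcal{R}$ preserves inequalities; in the lattice case one would instead apply $L_n$ to the two-sided form $-\varepsilon-\frac{2\|f\|_\infty}{\delta^{2}}\psi_x\le f-f(x)\le \varepsilon+\frac{2\|f\|_\infty}{\delta^{2}}\psi_x$ and record the resulting order squeeze for $L_nf-f$ between the positive, $\mathcal{R}$-null quantities $L_n\psi_\cdot(\cdot)$ and $|L_n1-1|$, so that $\mathcal{R}$ preserving order-inequalities applies.

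The hard part will be the additive constant $\varepsilon$. The estimate above is not literally of the form (\ref{des}) or (\ref{desi}): its right-hand side does not $\mathcal{R}$-converge to $0$ but only to $\varepsilon$, and since $C_\varepsilon$ blows up as $\varepsilon\to0$ there is no reparametrisation absorbing $\varepsilon+C_\varepsilon\sigma_n$ into a single linear multiple of $\sigma_n:=\|L_n1-1\|_\infty+\|L_nt-t\|_\infty+\|L_nt^{2}-t^{2}\|_\infty$, so one cannot finish with a single invocation of the preservation hypothesis. The genuinely new step, replacing the $\limsup\le\varepsilon$ argument available for ordinary convergence, is to run the preservation property on the family of inequalities indexed by $\varepsilon$ and then pass to the limit $\varepsilon\to0$; making this passage legitimate for an abstract $\mathcal{R}$ — rather than for a concrete method such as statistical or matrix convergence, where the threshold or density structure performs it by hand — is the crux of the argument. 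For the order version one must moreover check that $\mathcal{R}$ respects the lattice modulus $|\cdot|$ and multiplication by the fixed functions $t,t^{2}$, so that the positive building blocks $L_n\psi_\cdot(\cdot)$ and $|L_n1-1|$ are indeed $\mathcal{R}$-null.
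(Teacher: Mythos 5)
Your computation reproduces the paper's almost line by line: the same splitting $(L_kf)(x)-f(x)=[(L_kf)(x)-f(x)(L_k1)(x)]+f(x)[(L_k1)(x)-1]$, the same quadratic majorant $\varepsilon+\frac{2M}{\delta^2}\psi$, and the same expansion of $(L_k\psi)(x)$ in terms of the three test functions. The divergence is at the final step. Where you stop and flag the additive $\varepsilon$ as the unresolved crux, the paper writes ``Since $\varepsilon$ is arbitrary and $f$ is bounded, there exists a constant $C>0$ such that $(L_kf)(x)-f(x)\leq C(\dots)$'' and simply drops the $\varepsilon$, so that the estimate becomes literally of the forms (\ref{des}) and (\ref{desi}) and the preservation hypothesis can be invoked once. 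Your suspicion that this absorption is illegitimate is well founded: the constant produced by the argument is $C_\varepsilon\sim 2M/\delta(\varepsilon)^2$, which blows up as $\varepsilon\to 0$, and an $\varepsilon$-free inequality $\|L_nf-f\|_\infty\leq C\,\sigma_n$ with $\sigma_n=\|L_n1-1\|_\infty+\|L_nt-t\|_\infty+\|L_nt^2-t^2\|_\infty$ is false in general: for the Bernstein operators $\sigma_n=\frac{1}{4n}$, while for $f(x)=|x-\tfrac12|$ one has $\|B_nf-f\|_\infty\asymp n^{-1/2}$, so the ratio $\|B_nf-f\|_\infty/\sigma_n$ is unbounded.

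So your proposal is incomplete, but the step you leave open is exactly the step that the paper's own proof asserts without justification. What the computation actually yields is the family of inequalities $\|L_nf-f\|_\infty\leq\varepsilon+C_\varepsilon\,\sigma_n$ (and its two-sided order analogue), one for each $\varepsilon>0$. For every concrete method in Section \ref{aplicaciones} (ideal convergence, $A$-strong convergence, $f$-strong Ces\`aro convergence, almost convergence, regular nonnegative matrix methods) the passage $\varepsilon\to 0$ can be carried out by hand, because in each case one can show that the right-hand side is eventually (or statistically, or in mean) below $2\varepsilon$ and then use that a quantity squeezed below every $\varepsilon$ is null. For an abstract $\mathcal{R}$ satisfying only the stated definitions of preserving inequalities or order-inequalities, this passage does not follow; to make the theorem correct as an abstract statement one must either strengthen the preservation hypothesis to tolerate an additive constant (``if $\|w_n-w\|\leq\varepsilon+C_\varepsilon(\|x_n-x\|+\|y_n-y\|+\|z_n-z\|)$ for every $\varepsilon>0$, then $w_n\overset{\mathcal{R}}{\longrightarrow}w$'') or repair the proof along those lines. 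You identified the right obstruction; you did not overcome it, but neither does the paper.
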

\begin{proof}
Notice that  if  $L_nf\overset{\mathcal{R}}{\longrightarrow} f$ for any $f$, in particular for $e_1=1,e_2=t,e_3=t^2$, we get that $L_ne_i\overset{\mathcal{R}}{\longrightarrow} e_i$, $i=1,2,3$.

For the other implication we proceed as follows. We will show  inequalities similar to (\ref{des}) and (\ref{desi}) and we will use that $\mathcal{R}$ preserve inequalities.
 To this end, we  mimics the original ideas by Korovkin with some details point out by Mohiuddine \cite{almost}. 
Since $f$ is bounded on $\mathbb{R}$ there exists $M>0$ such that
\begin{equation}
\label{uno}
|f(t)-f(x)|< 2M, \quad -\infty<t,x<\infty.
\end{equation}
Since $f$ is continuous uniformly on $[0,1]$, given $\varepsilon>0$ there exists $\delta>0$ such that
\begin{equation}
    \label{dos}
|f(t)-f(x)|< \varepsilon
\end{equation}
whenever $|t-x|<\delta$.  Thus, if we denote $\psi(t)=(t-x)^2$, inequalities (\ref{uno}) and (\ref{dos}) provide
$$
|f(t)-f(x)|< \varepsilon+\frac{2M}{\delta^2}\psi,
$$
which means that
\begin{equation}
    \label{tres}
-\varepsilon-\frac{2M}{\delta^2}\psi< f(t)-f(x)<\varepsilon+\frac{2M}{\delta^2}\psi.
\end{equation}
Now, let us consider $x$ fixed, using that $L_n$ are positive and linear, we get:
\begin{eqnarray}
    \label{cuatro}
-\varepsilon(L_k1)(x) - \frac{2M}{\delta^2}(L_k \psi)(x) & < (L_kf)(x)-f(x)(L_k1)(x) \nonumber\\
&< \varepsilon (L_k 1)(x)+\frac{2M}{\delta^2} (L_k\psi)(x).
\end{eqnarray}
Since
\begin{eqnarray}
\label{cinco}
(L_kf)(x)-f(x)&=(L_kf)(x)-f(x)(L_k1)(x)+f(x)(L_k1)(x)-f(x) \nonumber\\
&=\left[(L_kf)(x)-f(x)(L_k1)(x)\right]+f(x)\left[(L_k1)(x)-1\right],
\end{eqnarray}
using (\ref{cuatro}) and (\ref{cinco}) we get:
\begin{equation}
    \label{seis}
    (L_kf)(x)-f(x)< \varepsilon (L_k1)(x)+\frac{2M}{\delta^2} (L_k\psi)(x)+f(x)\left[(L_k1)(x)-1\right]
\end{equation}
and
\begin{equation}
    \label{inferior1}
 -\varepsilon(L_k1)(x) - \frac{2M}{\delta^2}(L_k \psi)(x)+ f(x)\left[(L_k1)(x)-1\right]  <(L_kf)(x)-f(x).
\end{equation}

Now, let us estimate the term $(L_k\psi)(x)$:
\begin{eqnarray}
    \label{siete}
    (L_k\psi)(x)&=&(L_k (t^2-2tx+x^2))(x)\nonumber\\
    &=&(L_kt^2)(x) - 2x(L_kt)(x) + x^2(L_k1)(x)\nonumber\\
    &=& \left[(L_kt^2)-x^2\right]-2x\left[(L_kt)(x)-x\right]+x^2\left[(L_k1)(x)-1\right].
\end{eqnarray}
Incorporating (\ref{siete}) in (\ref{seis}) we get:

\begin{align*}
    (L_kf)(x)-f(x)&< \varepsilon \left[(L_k1)(x)-1\right]+\varepsilon+\frac{2M}{\delta^2}\left[ \left[(L_kt^2)(x)-x^2\right] \right.\\
   & \left. -2x\left[(L_kt)(x)-x\right]+x^2\left[(L_k1)(x)-1\right] \right]\\
    &+f(x)\left[(L_k1)(x)-1\right].
\end{align*}

 Since $\varepsilon$ is arbitrary and $f$ is bounded, there exists  a constant $C>0$ such that for every $x\in[0,1]$ we get:
$$
(L_kf)(x)-f(x) \leq  C((L_k1)(x)-1)+(L_kt)(x)-x)+(L_kt^2)(x)-x^2).
$$
Analogously, incorporating (\ref{siete}) in (\ref{inferior1}) we obtain:
\begin{align*}
    (L_kf)(x)-f(x)& >-\varepsilon((L_k1)(x)-1)-\varepsilon-\frac{2M}{\delta}\left[((L_kt^2)(x)-x^2)\right.\\
    &\left.-2x((L_kt)(x)-x)+x^2((L_k1)(x)-1) \right]\\
    & +f(x)((L_k1)(x)-1).
\end{align*}
Again, since $\varepsilon$ is arbitrary and $f$ is bounded, there exists a constant $C$ such that
$$
(L_kf)(x)-f(x) \geq  -C((L_k1)(x)-1)+(L_kt)(x)-x)+(L_kt^2)(x)-x^2).
$$
for any $x\in [0,1]$.
Therefore, we obtain the following inequalities
\begin{align*}
-C((L_k1)(x)-1)&+(L_kt)(x)-x)+(L_kt^2)(x)-x^2)<(L_kf)(x)-f(x)\\
& <C((L_k1)(x)-1)+(L_kt)(x)-x)+(L_kt^2)(x)-x^2)
\end{align*}
for any $x\in[0,1]$.
Thus, since by hypothesis $\mathcal{R}$ preserves order-inequalities we get that 
$L_kf\overset{\mathcal{R}}{\longrightarrow}f$ as we desired.

On the other hand, from the above inequalities we deduce that
$$
\|L_kf-f\|_\infty \leq C(\|L_k1-1\|_\infty+\|L_kx-x\|_\infty+\|L_kx^2-x^2\|_{\infty}).
$$ 
Thus, if $\mathcal{R}$ only preserve inequalities we obtain also that  $L_kf\overset{\mathcal{R}}{\longrightarrow}f$ as we desired.
\end{proof}

\begin{remark}
The above proof continues true for positive operators $L_n$ defined from $\mathcal{C}[a,b]$ into $B[a,b]$ the space of real valued bounded functions endowed with the supremum norm. In this situation the summability method $\mathcal{R}$ should be defined on  $B[a,b]$. An elegant proof of Korovkin's result via inequalities by Mitsuru Uchiyama \cite{mitsu} is recommended to the reader.
\end{remark}

\begin{remark}
Let us consider the classical Bernstein polynomials defined by
$$B_nf(x)=\sum_{k=0}^n f\left(\frac{n}{k}\right) \binom{k}{n} x^k (1-x)^{n-k}\quad 0\leq x\leq 1.$$
Denote by $\mathcal{T} \,:\,\mathcal{P}[0,1]\rightarrow \bigvee \{1,t,t^2\}$ the standard  projection and
let us consider the following summability method defined on $\mathcal{P}[0,1]\subset \mathcal{C}[0,1]$ the space of polynomials on $[0,1]$ as follows:
$$
p_n\overset{\mathcal{R}}{\longrightarrow} f
$$
if and only if $\mathcal{T} p_n\to f$ uniformly on $[0,1]$ provided such limit exists.
Now let us observe that $(B_n1)(x)=1, (B_nt)(x)=x$ and $(B_nt^2)(x)=x^2+\frac{x-x^2}{n}$ which implies that
$B_n1\overset{\mathcal{R}}{\longrightarrow} 1$, $B_nt\overset{\mathcal{R}}{\longrightarrow} t$ and $B_nt^2\overset{\mathcal{R}}{\longrightarrow} t^2$.
However, by construction we cannot assert that $B_np\overset{\mathcal{R}}{\longrightarrow} p$ for any polynomial $p$.
The argument in the proof breaks down if we can't guarantee that $\mathcal{R}$ preserves inequalities. This fact highlight the importance of this property in the proof of the above result. 
\end{remark}

The second Korovkin approximation theorem deals with approximation of the identity on the space of $2\pi$-periodic and continuous function on $\mathbb{R}$. We can extract the following result from the original proof by Korovkin. Let us denote by $\mathcal{C}_{2\pi}(\mathbb{R})$ the Banach lattice  of $2\pi$ periodic and continuous functions in $\mathbb{R}$ endowed with the pointwise order and the uniform norm.
\begin{lemma}[Second Korovkin theorem]
Let $L_n$ be a sequence of positive linear operators from $\mathcal{C}_{2\pi}(\mathbb{R})$ into
$\mathcal{C}_{2\pi}(\mathbb{R})$. There exists a constant $C>0$ such that
\begin{align*}
\left|L_n(f)(x)-f(x)\right|&<C\left|L_n(1)(x)-x+L_n(cos(t))(x)-cos(x)\right.\\
&\left.+L_n(sin(t))(x)-sin(x)\right|
\end{align*}
and
\begin{align*}
\|L_n(f)-f\|_{2\pi}&\leq C (\|L_n(1)-1\|_{2\pi}+\|L_n(cos(t))-cos(\cdot)\|_{2\pi}\\
&+\|L_n(sin(t))-sin(\cdot)\|_{2\pi}).
\end{align*}
\end{lemma}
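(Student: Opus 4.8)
The plan is to mirror the proof of the preceding theorem, replacing the parabola $\psi(t)=(t-x)^2$ by the $2\pi$-periodic test function $\psi_x(t)=\sin^2\!\big(\tfrac{t-x}{2}\big)=\tfrac12\big(1-\cos(t-x)\big)$, which is the natural substitute in the trigonometric setting: it is non-negative, vanishes exactly at $t=x$, and is bounded below by a positive constant away from $x$. First I would record the two elementary facts about $f\in\mathcal{C}_{2\pi}(\mathbb{R})$: since $f$ is bounded there is $M>0$ with $|f(t)-f(x)|<2M$ for all $t,x$, and since $f$ is uniformly continuous, given $\varepsilon>0$ there is $\delta>0$ with $|f(t)-f(x)|<\varepsilon$ whenever the periodic distance between $t$ and $x$ is less than $\delta$. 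On the complementary range one has $\psi_x(t)\geq\sin^2(\delta/2)$, so that
\begin{equation*}
-\varepsilon-\frac{2M}{\sin^2(\delta/2)}\,\psi_x(t)<f(t)-f(x)<\varepsilon+\frac{2M}{\sin^2(\delta/2)}\,\psi_x(t)
\end{equation*}
holds for every $t$, exactly as inequality \eqref{tres} in the algebraic case.

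Next I would fix $x$, read the above as an inequality between functions of $t$, and apply the positive linear operator $L_n$. Writing $K=2M/\sin^2(\delta/2)$ and using positivity and linearity yields a two-sided bound on $(L_nf)(x)-f(x)(L_n1)(x)$ in terms of $\varepsilon(L_n1)(x)$ and $K(L_n\psi_x)(x)$; then the decomposition
\begin{equation*}
(L_nf)(x)-f(x)=\big[(L_nf)(x)-f(x)(L_n1)(x)\big]+f(x)\big[(L_n1)(x)-1\big]
\end{equation*}
lets me absorb the factor $(L_n1)(x)-1$ just as in \eqref{cinco}.

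The heart of the matter is computing $(L_n\psi_x)(x)$. Using the addition formula $\cos(t-x)=\cos t\cos x+\sin t\sin x$ together with the linearity of $L_n$, I obtain
\begin{equation*}
(L_n\psi_x)(x)=\tfrac12\big[(L_n1)(x)-\cos x\,(L_n\cos)(x)-\sin x\,(L_n\sin)(x)\big],
\end{equation*}
and the crucial observation is that, because $\cos^2 x+\sin^2 x=1$, the constant term $\tfrac12(1-\cos^2 x-\sin^2 x)$ cancels. This lets me rewrite the expression purely through the three test-function errors:
\begin{align*}
(L_n\psi_x)(x)&=\tfrac12\big[(L_n1)(x)-1\big]-\tfrac{\cos x}{2}\big[(L_n\cos)(x)-\cos x\big]\\
&\qquad-\tfrac{\sin x}{2}\big[(L_n\sin)(x)-\sin x\big].
\end{align*}
Since $|\cos x|,|\sin x|\leq 1$, this bounds $|(L_n\psi_x)(x)|$ by the sum of the three errors. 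Substituting back and using that $\varepsilon$ is arbitrary and $f$ is bounded produces the pointwise inequality with a single constant $C>0$, and taking the supremum over $x\in\mathbb{R}$ gives the stated norm inequality. I expect the main obstacle to be precisely this algebraic reduction of $(L_n\psi_x)(x)$: identifying that $\sin^2\!\big(\tfrac{t-x}{2}\big)$ is the correct kernel and exploiting $\cos^2+\sin^2=1$ to eliminate the constant term; once this is in place, everything else is a routine transcription of the previous proof.
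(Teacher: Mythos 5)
Your argument is correct and is exactly the classical proof the paper defers to: the paper gives no proof of this lemma, noting only that it is ``extracted from the original proof by Korovkin'', and that original argument is precisely your substitution of $\sin^2\big(\tfrac{t-x}{2}\big)=\tfrac12\big(1-\cos(t-x)\big)$ for $(t-x)^2$, followed by the cancellation of the constant term via $\cos^2x+\sin^2x=1$. One remark: what your computation actually delivers is the bound of $|(L_nf)(x)-f(x)|$ by $C$ times the \emph{sum of the three absolute errors} (and hence the norm inequality), which is the form used later in the paper; the first display of the lemma as printed (a single absolute value around the signed sum, with the typo $-x$ in place of $-1$) does not literally follow from this or from any standard argument, but that is a defect of the statement, not of your proof.
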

We can deduce the following result:
\begin{theorem}
Let  $\mathcal{R}$  be a summability method on the Banach space  $\mathcal{C}_{2\pi}(\mathbb{R})$ endowed with the supremum norm. Assume that $\mathcal{R}$ preserves inequalities or $\mathcal{R}$ preserves order-inequalities. If $(L_n)$ is a sequence of positive operators  from $\mathcal{C}_{2\pi}(\mathbb{R})$ into $\mathcal{C}_{2\pi}(\mathbb{R})$ then  the following conditions are equivalents:
\begin{enumerate}
    \item For any $f\in \mathcal{C}_{2\pi}(\mathbb{R})$ the sequence $(L_nf)_n$ $\mathcal{R}$-converges to $f$.
    \item $L_n 1\overset{\mathcal{R}}{\longrightarrow}1$, $L_n \sin(t)\overset{\mathcal{R}}{\longrightarrow}\sin(t)$ and $L_n\cos(t) \overset{\mathcal{R}}{\longrightarrow}\cos(t)$.
\end{enumerate}
 
\end{theorem}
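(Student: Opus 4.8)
The plan is to mirror the proof of the first Korovkin theorem, now letting the preceding Lemma (the Second Korovkin theorem) play the role that the elementary estimates did there. The implication $(1)\Rightarrow(2)$ is immediate: since $1,\cos(t),\sin(t)$ all belong to $\mathcal{C}_{2\pi}(\mathbb{R})$, specializing the hypothesis in $(1)$ to these three functions yields exactly the three $\mathcal{R}$-limits asserted in $(2)$.

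For the converse $(2)\Rightarrow(1)$, fix $f\in\mathcal{C}_{2\pi}(\mathbb{R})$ and make the identifications $w_n=L_n f$, $w=f$, $x_n=L_n 1$, $x=1$, $y_n=L_n\cos$, $y=\cos$, $z_n=L_n\sin$, $z=\sin$; by hypothesis $(2)$ we then have $x_n\overset{\mathcal{R}}{\longrightarrow}x$, $y_n\overset{\mathcal{R}}{\longrightarrow}y$ and $z_n\overset{\mathcal{R}}{\longrightarrow}z$. The Lemma supplies precisely the two inequalities needed to trigger the preservation hypotheses. Its norm estimate is an instance of inequality (\ref{des}) for these sequences, so if $\mathcal{R}$ preserves inequalities we conclude at once that $w_n\overset{\mathcal{R}}{\longrightarrow}w$, that is $L_n f\overset{\mathcal{R}}{\longrightarrow}f$. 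If instead $\mathcal{R}$ preserves order-inequalities, I would invoke the pointwise estimate of the Lemma in its signed two-sided form
$$-C\,s_n(x)< (L_n f)(x)-f(x)< C\,s_n(x),$$
where $s_n=(L_n 1-1)+(L_n\cos-\cos)+(L_n\sin-\sin)$, which is exactly the shape (\ref{desi}); preservation of order-inequalities then gives $L_n f\overset{\mathcal{R}}{\longrightarrow}f$ as desired.

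The only genuinely delicate point, which I expect to be the main obstacle, is reconciling the absolute values appearing in the Lemma's pointwise inequality with the signed two-sided bound required by the definition of preserving order-inequalities. Rather than settle for a bound by $C|s_n|$, I would retrace the derivation behind the Lemma using the $2\pi$-periodic test function $\psi(t)=\sin^2\frac{t-x}{2}=\frac{1}{2}\bigl(1-\cos x\cos t-\sin x\sin t\bigr)$ in place of $(t-x)^2$. Expanding $(L_n\psi)(x)$ as a linear combination of $(L_n 1)(x)$, $(L_n\cos)(x)$ and $(L_n\sin)(x)$ and repeating the chain (\ref{uno})--(\ref{siete}) verbatim produces the signed squeeze with bounding sequences $\pm C s_n$, each of which $\mathcal{R}$-converges to $0$ by linearity of $\mathcal{R}$ together with hypothesis $(2)$. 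Everything else is a routine transcription of the first theorem's argument.
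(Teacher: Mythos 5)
Your proposal is correct and follows essentially the same route as the paper, which deduces the theorem directly from the Second Korovkin Lemma together with the preservation hypotheses (the paper offers no detail beyond citing the Lemma). Your observation that the Lemma's pointwise bound with absolute values on the right-hand side must be upgraded to the signed squeeze $-Cs_n(x)<(L_nf)(x)-f(x)<Cs_n(x)$ in order to match the definition of preserving order-inequalities --- achieved by rerunning the first theorem's chain of estimates with the periodic test function $\psi(t)=\sin^2\frac{t-x}{2}$ --- addresses a point the paper glosses over, and your fix is the right one.
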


\section{Applications}

\label{aplicaciones}

We will see how the results in Section \ref{main} allow us to obtain a unified treatment, visualizing and improving the results on this topic. Let us observe that when $\mathcal{R}$ is the norm convergence then it corresponds with the classical Korovkin's result \cite{altomare}.

{\bf Korovkin's Statement I.} {\it Let $\mathcal{R}\,:\,\mathcal{D}_{\mathcal{R}}\subset\mathcal{C}[0,1]
\to \mathcal{C}[0,1]$ be a summability method. Assume $L_n\,:\,\mathcal{C}[0,1]\to \mathcal{C}[0,1]$
is a sequence of positive linear operators. 
The following conditions are equivalents:
\begin{description}
\item[(i)] For any $f\in\mathcal{C}[0,1]$ and bounded in $\mathbb{R}$, $L_nf\overset{\mathcal{R}}{\longrightarrow}f$.
\item[(ii)] $L_n1\overset{\mathcal{R}}{\longrightarrow}1$, $L_nt\overset{\mathcal{R}}{\longrightarrow}t$ and $L_nt^2\overset{\mathcal{R}}{\longrightarrow}t^2$.
\end{description}}

{\bf Korovkin's Statement II.} {\it Let $\mathcal{R}\,:\,\mathcal{D}_{\mathcal{R}}\subset\mathcal{C}_{2\pi}(\mathbb{R})
\to \mathcal{C}_{2\pi}(\mathbb{R})$ be a summability method. Assume $L_n\,:\,\mathcal{C}_{2\pi}(\mathbb{R})\to \mathcal{C}_{2\pi}(\mathbb{R})$
is a sequence of positive linear operators. 
The following conditions are equivalents:
\begin{description}
\item[(i)] For any $f\in\mathcal{C}_{2\pi}(\mathbb{R})$,  $L_nf\overset{\mathcal{R}}{\longrightarrow}f$.
\item[(ii)] $L_n1\overset{\mathcal{R}}{\longrightarrow}1$, $L_n\sin(t)\overset{\mathcal{R}}{\longrightarrow}\sin(t)$ and $L_n\cos(t)\overset{\mathcal{R}}{\longrightarrow}\cos(t)$.
\end{description}}

In order to establish the above statements we only need to show that the summability method $\mathcal{R}$ preserves inequalities or it preserves order-inequalities.
Some  summability methods are defined through ideals.  Next, we will show that the ideal convergence preserves inequalities. 

{\bf Convergence through ideals. } Let $X$ be a Banach space endowed with the norm $\|\cdot\|$
Let us denote by $\mathcal{P}(\mathbb{N})$ the power set of $\mathbb{N}$.  Let us consider $\mathcal{I}\subset \mathcal{P}(\mathbb{N})$ an arbitrary family of subsets of $\mathbb{N}$. We will say that $\mathcal{I}$ is a non-trivial ideal if 

\begin{enumerate}
    \item $\mathcal{I}\neq \emptyset$ and $\mathcal{I}\neq \mathcal{P}(\mathbb{N})$ (non-triviality).
    \item If $A,B\in\mathcal{I}$ then $A\cup B\in\mathcal{I}$.
    \item If $A\subset B$ and $B\in \mathcal{I}$ then $A\in \mathcal{I}$. 
\end{enumerate}

We say that $\mathcal{I}$ is  regular (or admissible) if it  contains all finite subsets.
A sequence $(x_n)\subset X$ is said to be $\mathcal{I}$-convergent to $L$ (in short we  denote $L=\mathcal{I}-\lim_{n\to\infty}x_n$ or $x_n\overset{\mathcal{I}}{\longrightarrow} L$) if for any $\varepsilon>0$ the subset 
$$A(\varepsilon)=\{n\in \mathbb{N}\,: \, \|x_n-L\|>\varepsilon\}\in \mathcal{I}.$$

Let us see that if  $\mathcal{I}$ denotes the set of all finite subsets  then we it appears the usual convergence. Of course, if $\mathcal{I}\subset \mathcal{J}$ then  $\mathcal{I}$-convergence implies $\mathcal{J}$-convergence. Hence, usual convergence implies $\mathcal{I}$-convergence when  $\mathcal{I}$ is a regular ideal. 
Uniqueness of the limit is not true for $\mathcal{I}$-convergence,
however when $\mathcal{I}$ is non-trivial, $\mathcal{I}$  defines a summability method.

\begin{theorem}
\label{ideal}
Let $\mathcal{I}\subset \mathcal{P}(\mathbb{N})$ be a non-trivial ideal and let us consider the $\mathcal{I}$-convergence on $\mathcal{C}[0,1]$ and on $\mathcal{C}_{2\pi}(\mathbb{R})$. Then, the $\mathcal{I}$-convergence preserves inequalities.
\end{theorem}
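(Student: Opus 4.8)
The plan is to verify the definition of \textsf{preserves inequalities} directly from the ideal axioms, reducing everything to the key observation that the bound in~(\ref{des}) forces the exceptional set for $(w_n)$ to lie inside a finite union of exceptional sets for $(x_n),(y_n),(z_n)$. So first I would fix sequences $(w_n),(x_n),(y_n),(z_n)$ satisfying~(\ref{des}) with limits $w,x,y,z$, assume $x_n\overset{\mathcal{I}}{\longrightarrow}x$, $y_n\overset{\mathcal{I}}{\longrightarrow}y$, $z_n\overset{\mathcal{I}}{\longrightarrow}z$, fix an arbitrary $\varepsilon>0$, and aim to show that
$$A(\varepsilon)=\{n\in\mathbb{N}\,:\,\|w_n-w\|>\varepsilon\}\in\mathcal{I}.$$

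The core step is a pigeonhole argument. If $\|w_n-w\|>\varepsilon$, then by~(\ref{des}) we have $\|x_n-x\|+\|y_n-y\|+\|z_n-z\|>\varepsilon/C$, so at least one of the three summands must exceed $\varepsilon/(3C)$. This yields the inclusion
$$A(\varepsilon)\subseteq B_x\cup B_y\cup B_z,$$
where $B_x=\{n:\|x_n-x\|>\varepsilon/(3C)\}$ and $B_y,B_z$ are defined analogously. By the three convergence hypotheses, applied with tolerance $\varepsilon/(3C)>0$, each of $B_x,B_y,B_z$ belongs to $\mathcal{I}$.

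Finally I would invoke the ideal axioms: closure under finite unions (axiom (2), applied twice) gives $B_x\cup B_y\cup B_z\in\mathcal{I}$, and the hereditary property (axiom (3)) then forces $A(\varepsilon)\in\mathcal{I}$ as a subset of a member of $\mathcal{I}$. Since $\varepsilon>0$ was arbitrary, this means $w_n\overset{\mathcal{I}}{\longrightarrow}w$, which is precisely the conclusion demanded by the definition of preservation of inequalities.

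As for the main obstacle, there is essentially none of analytic depth here: the only points requiring care are getting the pigeonhole step to produce the inclusion with the correct tolerance $\varepsilon/(3C)$, and making sure that \emph{both} the finite-union and the downward-closure axioms are used (the former to assemble the three exceptional sets, the latter to pass from the union back down to $A(\varepsilon)$). It is worth noting that non-triviality of $\mathcal{I}$ plays no role in this implication; it is only needed, as recalled before the statement, to guarantee that $\mathcal{I}$-convergence is a bona fide summability method. Thus the argument in fact goes through for every ideal on $\mathbb{N}$, and the same reasoning applies verbatim on $\mathcal{C}[0,1]$ and on $\mathcal{C}_{2\pi}(\mathbb{R})$.
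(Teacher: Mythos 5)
Your proof is correct and follows essentially the same route as the paper's: bound the exceptional set $A(\varepsilon)$ of $(w_n)$ by a union of three exceptional sets coming from the hypothesis, then apply the finite-union and hereditary axioms of the ideal. Your pigeonhole step with tolerance $\varepsilon/(3C)$ is in fact slightly more careful than the paper's, which uses $\varepsilon/C$ for each of the three sets and so only literally yields $A(3\varepsilon)\subset A_1\cup A_2\cup A_3$ — a harmless slip since $\varepsilon$ is arbitrary, but your version avoids it.
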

\begin{proof}
It is sufficient to show the result for the $\mathcal{I}$-convergence on $\mathcal{C}[0,1]$, the same argument is true on $\mathcal{C}_{2\pi}(\mathbb{R})$.
Indeed, assume that there exists $C>0$ such that
\begin{equation}
    \label{ecuacion}
\|w_n-w\|_\infty\leq C(\|x_n-x\|_\infty+\|y_n-y\|_\infty+\|z_n-z\|_\infty)
\end{equation}
for all $n$,  $x_n\overset{\mathcal{I}}{\longrightarrow}x$, $y_n\overset{\mathcal{I}}{\longrightarrow}y$ and $z_n\overset{\mathcal{I}}{\longrightarrow}z$. We wish to show that $w_n\overset{\mathcal{I}}{\longrightarrow}w$.

Fix $\varepsilon>0$, we wish to show that
$$
A(\varepsilon)=\{n\in\mathbb{N}\,:\, \|w_n-w\|_{\infty}>\varepsilon\}\in\mathcal{I}.
$$
By hypothesis;
$A_1=\{n\in\mathbb{N}\,:\, \|x_n-x\|>\varepsilon/C\}\in \mathcal{I}$, $A_2\{n\in\mathbb{N}\,:\, \|y_n-y\|>\varepsilon/C\}\in \mathcal{I}$ and
$A_3=\{n\in\mathbb{N}\,:\, \|z_n-y\|>\varepsilon/C\}\in \mathcal{I}$.

According to (\ref{ecuacion}) we have that $A(\varepsilon)\subset A_1\cup A_2\cup A_3$, then by applying (2) and (3) in the definition of ideal, we  get that $A(\varepsilon)\in\mathcal{I}$ as  we desired.\end{proof}

 \begin{corollary}
 Korovkin's statements continue being true if we incorporate the  convergence by a non-trivial ideal.
 \end{corollary}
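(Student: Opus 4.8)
The plan is to obtain the corollary as a direct consequence of Theorem \ref{ideal} together with the two main Korovkin theorems proved earlier in Section \ref{main}. Theorem \ref{ideal} already asserts that for any non-trivial ideal $\mathcal{I}$ the associated $\mathcal{I}$-convergence preserves inequalities, both on $\mathcal{C}[0,1]$ and on $\mathcal{C}_{2\pi}(\mathbb{R})$. Once this is in hand, I would simply take $\mathcal{R}$ to be the $\mathcal{I}$-convergence method and invoke the first main theorem to deduce Korovkin's Statement I and the second main theorem to deduce Korovkin's Statement II, since both theorems require only that $\mathcal{R}$ preserve inequalities (or order-inequalities).

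Before invoking those theorems, I would check that $\mathcal{I}$-convergence really is a summability method in the sense fixed at the start of Section \ref{main}, i.e.\ a single-valued map whose limit is insensitive to the initial terms. For single-valuedness I would use non-triviality: if a sequence $\mathcal{I}$-converged to two distinct limits $L_1\neq L_2$, then choosing $\varepsilon=\|L_1-L_2\|/3$ would place both exceptional sets $\{n:\|x_n-L_i\|>\varepsilon\}$ in $\mathcal{I}$, hence their union as well by axiom (2); since $\mathbb{N}\notin\mathcal{I}$ for a non-trivial ideal, some index lies outside the union, and the triangle inequality there forces $\|L_1-L_2\|<\|L_1-L_2\|$, a contradiction. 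For insensitivity to the first terms I would note that dropping an initial segment only shrinks each exceptional set $\{n:\|x_n-L\|>\varepsilon\}$, so downward closure (axiom (3)) keeps it in $\mathcal{I}$ and leaves the limit unchanged, with no need for regularity of $\mathcal{I}$.

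With these structural facts confirmed, the corollary is immediate: the hypotheses of the two main theorems are satisfied by $\mathcal{I}$-convergence thanks to Theorem \ref{ideal}, so both equivalences transfer verbatim. I expect essentially no obstacle here, since the substance of the argument was already discharged in the proof of Theorem \ref{ideal}, where the inclusion $A(\varepsilon)\subset A_1\cup A_2\cup A_3$ together with the ideal axioms does all the work. The only point requiring a little care is the preliminary verification that $\mathcal{I}$-convergence qualifies as an admissible summability method for a general non-trivial (not necessarily regular) ideal, and this is settled by the two observations above.
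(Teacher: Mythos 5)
Your proposal is correct and follows exactly the route the paper intends: the corollary is an immediate consequence of Theorem \ref{ideal} combined with the two main theorems of Section \ref{main}, and the paper itself supplies no further argument. Your additional verification that a non-trivial ideal yields a well-defined (single-valued, initial-segment-insensitive) summability method is a welcome supplement to claims the paper only asserts, though note that if dropping the first $n_0$ terms is understood as \emph{re-indexing} the sequence, the exceptional set is shifted rather than merely shrunk, and shift-invariance can fail for a non-regular ideal; your reading (keeping the original indices) is the one under which downward closure suffices.
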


The following trick is folklore and  it allows us to show that the above Korovkin-type approximation result improves the classical one.

\begin{example}
Assume that $\mathcal{I}\subset \mathcal{P}(\mathbb{N})$ is an ideal which contains a non-finite subset, that is, the $\mathcal{I}$-convergence on $\mathcal{C}[0,1]$ is not the uniform convergence.  Let $A\in \mathcal{I}$ non-finite.
Then, let us consider the following sequence
$$
z_n=
\begin{cases}
1 & n\in A\\
0 & n\notin A.
\end{cases}
$$
If we denote by $B_n(f,x)$ the classical Bernstein polynomials, a simple check show that $(1+z_n) B_n(e_i,x)\overset{\mathcal{I}}{\longrightarrow}e_i$ for $e_0=1,e_1=x$ and $e_2=x^2$. However, since $A$ is infinite $(1+z_n)B_n(e_i)$ does not converge to $e_i$ uniformly. That is, the sequence $L_nf=(1+z_n)B_n(f)$ does not satisfy the classical Korovkin theorem.
\end{example}

\begin{remark}
\label{nota}
Let us observe that the above result is also true for a {\sf natural summability method} \cite{unificacion}. Let us recall that
a summability method
$\rho\,:\,D_\rho\subset X^{\mathbb{N}}\to X$  is  natural provided there exists a non-trivial regular ideal $\mathcal{I}\subset \mathcal{P}(\mathbb{N})$ such that in the realm of all bounded sequences  both convergence method $\rho$ and $\mathcal{I}$ are equivalents.   
\end{remark}

 {\bf Statistical Convergence and Strong w$^p$-Ces\`aro convergence. } 
 This is an example when it appears the situation of Remark \ref{nota}. The strong ${\rm w}^p$-Ces\`aro convergence ($0<p<\infty$), is a natural summability method. A sequence $(x_k)$ in a Banach space $X$ is said to be w$^p$-Ces\`aro convergent to $L$ ($p>0$)
if $\lim_{n\to \infty}\frac{1}{n}\sum_{l=1}^n \|x_l-L\|^p=0$. 
Let us denote by $d(\cdot)$ the usual density defined on the subsets of natural numbers. It is well known that the statistical convergence is a kind of ideal convergence, where the ideal is defined by $\mathcal{I}_d=\{ A\subset \mathbb{N}, \,:\, d(A)=0\}$.
Connor established   that  in the realm of all bounded sequences ${\rm w}^p$-convergence and statistical convergence  are equivalent. 
 
 As a consequence of the above considerations we can deduce the following result by 
 Gadjiev and Orhan \cite{orhan}.
 \begin{corollary}
 Korovkin's statements continue being true if we incorporate the Statistical convergence and the Strong w$^p$-Ces\`aro convergence $p>0$.
 \end{corollary}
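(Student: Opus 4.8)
The plan is to treat the two convergence methods separately, reducing each to the machinery already in place. For statistical convergence I would recognise it as an ideal convergence and invoke the corollary already proved for non-trivial ideals, while for strong $w^p$-Ces\`aro convergence I would verify the preservation of inequalities directly, in the spirit of Theorem~\ref{ideal}, and then appeal to the general Korovkin-type theorem. In both cases it is the \emph{preserves inequalities} branch of the hypothesis that gets used, so only the norm version of the sandwich condition needs to be checked.

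First, for statistical convergence, I would observe that it is exactly $\mathcal{I}_d$-convergence, where $\mathcal{I}_d=\{A\subset\mathbb{N}:d(A)=0\}$. It then suffices to check that $\mathcal{I}_d$ is a non-trivial ideal: it is nonempty (every finite set has density $0$, so $\mathcal{I}_d$ is in fact regular) and proper (since $d(\mathbb{N})=1$, we have $\mathbb{N}\notin\mathcal{I}_d$); it is closed under finite unions because $d(A\cup B)\le d(A)+d(B)$; and it is closed under subsets because $d$ is monotone. With this verified, the previously established corollary for non-trivial ideals applies verbatim and yields both of Korovkin's Statements for statistical convergence.

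Second, for strong $w^p$-Ces\`aro convergence I would show directly that the method preserves inequalities. Suppose the sequences satisfy $\|w_n-w\|\le C(\|x_n-x\|+\|y_n-y\|+\|z_n-z\|)$ for all $n$, together with $x_n\overset{w^p}{\longrightarrow}x$, $y_n\overset{w^p}{\longrightarrow}y$ and $z_n\overset{w^p}{\longrightarrow}z$. Using the elementary bound $(a+b+c)^p\le K_p(a^p+b^p+c^p)$, valid with $K_p=1$ when $0<p<1$ and with $K_p=3^{p-1}$ when $p\ge 1$, I would estimate $\|w_l-w\|^p$ term by term and average over $l=1,\dots,n$ to obtain
\begin{equation*}
\frac{1}{n}\sum_{l=1}^n\|w_l-w\|^p\le C^pK_p\left(\frac{1}{n}\sum_{l=1}^n\|x_l-x\|^p+\frac{1}{n}\sum_{l=1}^n\|y_l-y\|^p+\frac{1}{n}\sum_{l=1}^n\|z_l-z\|^p\right).
\end{equation*}
Letting $n\to\infty$, the right-hand side tends to $0$, whence $w_n\overset{w^p}{\longrightarrow}w$. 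Thus $w^p$-Ces\`aro convergence preserves inequalities and the general theorem applies, giving Korovkin's Statements for this method as well.

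I do not expect a serious obstacle here: the statistical part is essentially bookkeeping once $\mathcal{I}_d$ is identified as an ideal, and the only genuine point in the $w^p$ part is to split the power inequality constant $K_p$ according to whether $0<p<1$ or $p\ge 1$, after which the averaging argument mirrors the one in Theorem~\ref{ideal}. I would also note the alternative route: since $w^p$-Ces\`aro convergence is a natural summability method, equivalent to statistical convergence on bounded sequences by Connor's theorem, Remark~\ref{nota} delivers the conclusion at once. The direct verification of preservation of inequalities is nonetheless preferable, as it bypasses any need to confirm that the operator sequences involved are bounded before invoking the bounded-sequence equivalence.
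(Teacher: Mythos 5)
Your treatment of statistical convergence coincides with the paper's: both identify it as $\mathcal{I}_d$-convergence for the zero-density ideal and then invoke the corollary already established for non-trivial ideals. For the strong $w^p$-Ces\`aro part, however, you take a genuinely different route. The paper never verifies preservation of inequalities for $w^p$-convergence; instead it appeals to Connor's theorem that, in the realm of bounded sequences, strong $w^p$-Ces\`aro convergence and statistical convergence are equivalent --- i.e.\ that $w^p$-convergence is a natural summability method in the sense of Remark~\ref{nota} --- and transfers the ideal-convergence result. Your direct argument, raising the sandwich inequality to the $p$-th power, using $(a+b+c)^p\le K_p(a^p+b^p+c^p)$ with $K_p=1$ for $0<p<1$ and $K_p=3^{p-1}$ for $p\ge 1$, and then averaging over $l=1,\dots,n$, is correct and self-contained, and it mirrors exactly the pattern the paper itself uses later for $A$-strong and $f$-strong Ces\`aro convergence. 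It also has the concrete advantage you point out: Connor's equivalence is only valid for bounded sequences, so the paper's route tacitly requires knowing that the sequences $(L_ne_i)$ are norm-bounded, which is not among the stated hypotheses, whereas your verification that $w^p$-convergence preserves inequalities feeds into the main theorem with no boundedness caveat. What the paper's route buys in exchange is brevity and the observation that the same transfer principle applies to any natural summability method, not just this one.
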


  {\bf $A$-statistical convergence and $A$-strong  convergence.} Let us consider a matrix $A=(\alpha_{ij})_{(i,j)\in \mathbb{N}\times\mathbb{N}}$ with non-negative entries.  A sequence $(x_i)$
in the space $X$ is said to be $A$-strong summable to $L\in X$, if $$\lim_n \sum_j \alpha_{nj} \|x_j-L\|=.0$$  And the sequence $(x_j)$  is said to be $A$-statistically convergent to $L$ if for any $\varepsilon>0$
$$
\lim_{n\to \infty} \sum_{\substack{j \\ \|x_j-L\|\geq\varepsilon}}
\alpha_{nj}=0.
$$
Both concepts were also introduced by Connor in \cite{connor88} and  he proved  that in the realm of the bounded sequences both notions are equivalents. 
\begin{corollary}
Let $A=(\alpha_{ij})_{(i,j)\in \mathbb{N}\times\mathbb{N}}$ be a matrix with with non-negative entries. Korovkin's statements remain true if we incorporate the $A$-Statistical convergence and the $A$-Strong Ces\`aro convergence.
\end{corollary}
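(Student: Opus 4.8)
The plan is to reduce the corollary to the main Theorem (and to its $2\pi$-periodic counterpart for Statement II) by checking, for each of the two convergence methods separately, that it \emph{preserves inequalities}; once this is done, both Korovkin statements follow at once. As in the case of ordinary statistical convergence treated above, the natural strategy is to realise $A$-statistical convergence as an instance of ideal convergence, and to treat $A$-strong convergence directly by exploiting non-negativity of the entries.

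For the $A$-statistical convergence I would introduce the \emph{$A$-density} of a set $E\subset\mathbb{N}$ as $\delta_A(E)=\lim_n\sum_{j\in E}\alpha_{nj}$ and set $\mathcal{I}_A=\{E\subset\mathbb{N}:\delta_A(E)=0\}$. A sequence is $A$-statistically convergent to $L$ precisely when it is $\mathcal{I}_A$-convergent to $L$, so it suffices to verify that $\mathcal{I}_A$ is a non-trivial ideal and then quote Theorem \ref{ideal}. The ideal axioms are immediate from the hypotheses on $A$: subadditivity of the partial sums $\sum_{j\in E\cup F}\alpha_{nj}\leq\sum_{j\in E}\alpha_{nj}+\sum_{j\in F}\alpha_{nj}$ gives closure under finite unions, while non-negativity of the entries gives $\sum_{j\in E}\alpha_{nj}\leq\sum_{j\in F}\alpha_{nj}$ whenever $E\subset F$, hence closure under subsets. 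Consequently, by Theorem \ref{ideal} the $\mathcal{I}_A$-convergence, that is, the $A$-statistical convergence, preserves inequalities, and the main Theorem applies.

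For the $A$-strong convergence I would argue directly, without passing through an ideal. Suppose $(w_n),(x_n),(y_n),(z_n)$ satisfy (\ref{des}) and that $x_n,y_n,z_n$ are $A$-strongly convergent to $x,y,z$. Multiplying (\ref{des}) by $\alpha_{nj}$, summing over $j$, and using that all entries are non-negative yields
$$\sum_j\alpha_{nj}\|w_j-w\|\leq C\Bigl(\sum_j\alpha_{nj}\|x_j-x\|+\sum_j\alpha_{nj}\|y_j-y\|+\sum_j\alpha_{nj}\|z_j-z\|\Bigr),$$
and letting $n\to\infty$ forces the left-hand side to tend to $0$; hence $w_n$ is $A$-strongly convergent to $w$, so $A$-strong convergence preserves inequalities and again the main Theorem applies. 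Alternatively, one may invoke Connor's equivalence of the two methods on bounded sequences to present $A$-strong convergence as a natural summability method and cite Remark \ref{nota}, but the direct computation has the advantage of avoiding any boundedness bookkeeping.

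The step that requires genuine care is the non-triviality of $\mathcal{I}_A$: with mere non-negativity of the entries one could have $\delta_A(\mathbb{N})=0$, in which case $\mathbb{N}\in\mathcal{I}_A$ and the method degenerates (every sequence converging to every limit). This is why the matrix must in fact be regular in the Silverman--Toeplitz sense, so that $\sum_j\alpha_{nj}\to1$ and therefore $\mathbb{N}\notin\mathcal{I}_A$; I would record this regularity hypothesis explicitly rather than leaving it at ``non-negative entries''. With non-triviality secured, the remaining verifications are the routine ones above, and the only further point to watch is that the whole argument be carried out verbatim on both $\mathcal{C}[0,1]$ and $\mathcal{C}_{2\pi}(\mathbb{R})$, so as to cover Statement I and Statement II simultaneously.
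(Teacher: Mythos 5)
Your treatment of the $A$-strong convergence is exactly the paper's proof: multiply the inequality (\ref{des}) by the non-negative entries $\alpha_{nj}$, sum over $j$, and let $n\to\infty$. Where you diverge is in the $A$-statistical half. The paper only proves the $A$-strong case and declares this ``sufficient,'' implicitly leaning on Connor's equivalence of the two methods on bounded sequences (quoted just before the corollary); you instead handle $A$-statistical convergence directly by packaging it as $\mathcal{I}_A$-convergence for the $A$-density ideal and invoking Theorem \ref{ideal}. Your route is more self-contained and sidesteps the boundedness bookkeeping that the equivalence argument would require. You also put your finger on a real weakness in the corollary as stated: with mere non-negativity of the entries, $\mathcal{I}_A$ need not be a non-trivial ideal (if $\sum_j\alpha_{nj}\to 0$ then $\mathbb{N}\in\mathcal{I}_A$ and every sequence is $A$-statistically convergent to everything), so some regularity-type hypothesis such as $\lim_n\sum_j\alpha_{nj}=1$ is genuinely needed for the $A$-statistical statement to have content; the paper does not record this. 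Note that the non-triviality issue does not affect the ``preserves inequalities'' computation for $A$-strong convergence itself, which goes through with non-negativity alone, so your direct argument for that half is exactly as economical as the paper's.
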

\begin{proof}
First of all, we consider the space  $\mathcal{C}[0,1]$, the proof for the space $\mathcal{C}_{2\pi}(\mathbb{R})$ runs similar.
It is sufficient to show that the $A$-strong Cesàro convergence preserves inequalities.
Indeed, assume that
$$
\|w_n-w\|_\infty\leq C(\|x_n-x\|_\infty+\|y_n-y\|_\infty+\|z_n-z\|_\infty)
$$
then
\begin{align*}
\sum_{j} \alpha_{nj}\|w_j-w\|_\infty&\leq C(\sum_j \alpha_{nj} \|x_j-x\|_\infty+\sum_j \alpha_{nj} \|y_j-y\|_\infty\\
&+\sum_j \alpha_{nj} \|z_j-z\|_\infty),
\end{align*}
taking limits on $n$ we get the desired result.
\end{proof}

{\bf $f$-statistical convergence and $f$-strong Ces\`aro convergence.} Let us recall that $f\,:\, \mathbb{R}^+\to \mathbb{R}^+$ is said to be a modulus function if it satisfies:
\begin{enumerate}
    \item $f(x)=0 $ if and only if $x=0$.
    \item $f(x+y)\leq f(x)+f(y)$ for every $x,y\in \mathbb{R}^+$.
    \item $f$ is increasing.
    \item $f$ is continuous from the right at $0$.
\end{enumerate}
A sequence $(x_n)$ is said to be $f$-statistically convergent to $L$ if for every $\varepsilon>0.$ 
$$
\lim_{n\to \infty}\frac{f(\#\{ k\leq n\,\,:\,\|x_k-L\|>\varepsilon\})}{f(n)}=0
$$
and a sequence $(x_n)$ is said to be $f$-strong Ces\`aro convergent to $L$ if
$$
\lim_{n\to \infty}\frac{f\left(\sum_{k=1}^n\|x_k-L\| \right)}{f(n)}=0.
$$
To avoid trivialities $f$ must be unbounded. While the $f$-statistical convergence is defined by means of a non-trivial ideal, however in general on the realm of bounded sequences both convergence methods are not equivalent (in \cite{jia} are characterized the modulus functions for which this phenomenon happens).

\begin{corollary}
Let $f$ be an unbounded modulus function. Korovkin's statements remain true if we incorporate the $f$-Statistical convergence and the $f$-Strong Ces\`aro convergence.
\end{corollary}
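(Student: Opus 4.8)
The plan is to invoke the main theorem of Section \ref{main}: since it guarantees a Korovkin-type result whenever the summability method preserves inequalities (or order-inequalities), it suffices to verify that each of the two convergence methods under consideration preserves inequalities. I would treat the two methods separately, since they are of a rather different nature. Throughout, the only structural properties of the modulus function I expect to need are its monotonicity (axiom 3) and its subadditivity (axiom 2), together with the fact that $f$ is unbounded.

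For the $f$-statistical convergence I would first observe that it is governed by the family
$$
\mathcal{I}_f = \left\{A\subseteq\mathbb{N}\,:\,\lim_{n\to\infty}\frac{f(\#\{k\leq n\,:\,k\in A\})}{f(n)}=0\right\},
$$
in the sense that $x_n$ is $f$-statistically convergent to $L$ exactly when $\{k:\|x_k-L\|>\varepsilon\}\in\mathcal{I}_f$ for every $\varepsilon>0$. The plan is then to check that $\mathcal{I}_f$ is a non-trivial ideal and to apply Theorem \ref{ideal}. Non-triviality follows because $f$ is unbounded, so $\mathbb{N}\notin\mathcal{I}_f$; closure under subsets is immediate from the monotonicity of $f$; and closure under finite unions follows from the counting inequality $\#\{k\leq n:k\in A\cup B\}\leq\#\{k\leq n:k\in A\}+\#\{k\leq n:k\in B\}$ together with the subadditivity of $f$, after dividing by $f(n)$ and passing to the limit. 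Once $\mathcal{I}_f$ is recognised as a non-trivial ideal, Theorem \ref{ideal} yields at once that $f$-statistical convergence preserves inequalities, and hence that Korovkin's statements hold.

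For the $f$-strong Ces\`aro convergence the situation is genuinely different, since it need not coincide with any ideal convergence; here I would argue directly. Assuming the inequality
$$
\|w_n-w\|_\infty\leq C(\|x_n-x\|_\infty+\|y_n-y\|_\infty+\|z_n-z\|_\infty)
$$
holds for all $n$ with $x_n,y_n,z_n$ being $f$-strong Ces\`aro convergent to $x,y,z$, I would sum over $k\leq n$, apply the increasing function $f$ to both sides, and then use subadditivity to split the right-hand side into three separate modulus terms. The constant $C$ is absorbed through the elementary bound $f(Ct)\leq\lceil C\rceil f(t)$, valid for every modulus function (choose an integer $m\geq C$, so that $Ct\leq mt$ and $f(mt)\leq m f(t)$ by iterating subadditivity). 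Dividing by $f(n)$ and letting $n\to\infty$, each of the three terms tends to $0$ by hypothesis, whence $w_n$ is $f$-strong Ces\`aro convergent to $w$. This shows that $f$-strong Ces\`aro convergence preserves inequalities and completes the argument.

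The routine calculations are genuinely routine, so the only point requiring care is the treatment of the multiplicative constant $C$: because a modulus function need not be homogeneous, one cannot simply pull $C$ outside $f$, and the correct substitute is the bound $f(Ct)\leq\lceil C\rceil f(t)$ coming from subadditivity. This is the single place where the modulus-function axioms are used essentially, and it is the mild obstacle one must navigate; the same subadditivity reappears in verifying that $\mathcal{I}_f$ is closed under unions, so the two halves of the proof rest on the same elementary property of $f$.
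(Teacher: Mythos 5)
Your proposal is correct and follows essentially the same route as the paper: the $f$-statistical case is reduced to Theorem \ref{ideal} by recognising the underlying ideal $\mathcal{I}_f$ (you merely spell out the verification that $\mathcal{I}_f$ is a non-trivial ideal, which the paper takes for granted), and the $f$-strong Ces\`aro case is handled by the same direct computation, with your bound $f(Ct)\leq\lceil C\rceil f(t)$ playing exactly the role of the paper's reduction to an integer constant $C=k$ via monotonicity and iterated subadditivity.
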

\begin{proof}
Since the $f$-statistical convergence is a special case of ideal-convergence, the result follows by Theorem \ref{ideal}.

Let us prove that the $f$-Strong Cesàro convergence preserves inequalities. It is sufficient to show that on the space $\mathcal{C}[0,1]$. Indeed,  assume that
$$
\|w_n-w\|_\infty\leq C(\|x_n-x\|_\infty+\|y_n-y\|_\infty+\|z_n-z\|_\infty),
$$
without loss we can suppose that the constant $C=k\in \mathbb{N}$ is a natural number, thus by applying the properties (2) and (3) of the modulus $f$, we get:
therefore
\begin{align*}
    f\left(\sum_{l=1}^n \|w_l-w\|_\infty \right) & \leq f\left(\sum_{l=1}^n k\|x_l-x\|_\infty+k\|y_l-y\|_\infty+k\|z_l-z\|_\infty\right)\\
    &\leq kf\left(\sum_{l=1}^n \|x_l-x\|_\infty \right)+kf\left(\sum_{l=1}^n \|y_l-y\|_\infty \right)\\&+kf\left(\sum_{l=1}^n \|z_l-z\|_\infty \right),
\end{align*}
dividing by $f(n)$ and taking limits as $n$ tends to infinite, we prove the desired result.
\end{proof}

{\bf Some summability methods that preserve order-inequalities. }
In this paragraph we analyse some summability methods that preserve order inequalities, thus Korovkin statements continue being true if we incorporate these summability methods.

A sequence $(f_k)$ in a Banach lattice $(X,<)$ endowed with the lattice norm $\|\cdot\|$ is said to be almost convergent to $L\in X$ if the double sequence
$$
\frac{1}{m+1}\sum_{i=0}^mx_{n+i}
$$
converges to $L$ as $m\to\infty$ uniformly in $n$.

\begin{theorem}
Let $(X,<)$ be a Banach lattice with the lattice norm $\|\cdot\|$. Let us consider $\mathcal{R}$ the almost summability defined on $\mathcal{D}_{\mathcal{R}}\subset X^{\mathbb{N}}$. Then $\mathcal{R}$ preserves order-inequalities.
\end{theorem}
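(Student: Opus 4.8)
The plan is to exploit two structural features of almost convergence: the averaging operators behind it are positive and linear, and the lattice norm on $X$ is monotone. For a sequence $(u_k)\in X^{\mathbb{N}}$ and integers $m\geq 0$, $n\geq 1$ set $\sigma_{m,n}(u)=\frac{1}{m+1}\sum_{i=0}^m u_{n+i}$, so that by definition $u_k\overset{\mathcal{R}}{\longrightarrow}\ell$ exactly when $\|\sigma_{m,n}(u)-\ell\|\to 0$ as $m\to\infty$, \emph{uniformly in} $n$. The first step is to record the elementary facts that each $\sigma_{m,n}$ is a positive linear operator (being a convex combination of coordinate evaluations) and that it fixes constant sequences, whence $\sigma_{m,n}((u_k-\ell)_k)=\sigma_{m,n}(u)-\ell$ for any $\ell\in X$.

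Next, assume the sequences satisfy the order-inequality (\ref{desi}) and that $x_n\overset{\mathcal{R}}{\longrightarrow}x$, $y_n\overset{\mathcal{R}}{\longrightarrow}y$, $z_n\overset{\mathcal{R}}{\longrightarrow}z$. Writing $S_{m,n}=(\sigma_{m,n}(x)-x)+(\sigma_{m,n}(y)-y)+(\sigma_{m,n}(z)-z)$ and applying $\sigma_{m,n}$ to (\ref{desi}), positivity and linearity preserve the two-sided bound $-C\,S_{m,n}\leq \sigma_{m,n}(w)-w\leq C\,S_{m,n}$ (strict inequalities may weaken to $\leq$, which is harmless for the argument). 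In particular $C\,S_{m,n}\geq 0$, so the lattice identity $|v|=v\vee(-v)\leq u$, valid whenever $-u\leq v\leq u$, yields $|\sigma_{m,n}(w)-w|\leq C\,S_{m,n}$.

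Finally, I would apply the monotonicity of the lattice norm together with the triangle inequality to obtain $\|\sigma_{m,n}(w)-w\|\leq C\,\|S_{m,n}\|\leq C\big(\|\sigma_{m,n}(x)-x\|+\|\sigma_{m,n}(y)-y\|+\|\sigma_{m,n}(z)-z\|\big)$. Since each summand on the right tends to $0$ uniformly in $n$ as $m\to\infty$ by hypothesis, so does the left-hand side, which is precisely $w_n\overset{\mathcal{R}}{\longrightarrow}w$. The one point deserving care---and the real content of the statement---is the uniformity in $n$: almost convergence demands that a single $m$ control all shifts simultaneously, and the argument succeeds only because the final estimate holds for every $n$ with the same $m$, thereby transferring uniform smallness from the three data sequences to $w$. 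The remaining subtleties, namely the passage from the two-sided order bound to the modulus bound and the monotonicity of the lattice norm, are standard facts about Banach lattices.
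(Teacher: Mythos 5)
Your proposal is correct and follows essentially the same route as the paper: average the order-inequality over the window $n,\dots,n+m$, pass from the two-sided order bound to a modulus bound, take lattice norms with the triangle inequality, and let $m\to\infty$ uniformly in $n$. Your write-up is in fact slightly more careful than the paper's, since you make explicit the positivity of the averaging operators, the lattice fact that $-u\leq v\leq u$ implies $|v|\leq u$, and the monotonicity of the lattice norm, all of which the paper uses implicitly.
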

\begin{proof}
Indeed, assume that there exist sequences $(w_i),(x_i),(y_i)$ and $(z_i)$ satisfying
\begin{align*}
-C\left((x_i-x)+(y_i-y)+(z_i-z)\right) &<w_i-w \\
&< C\left((x_i-x)+(y_i-y)+(z_i-z)\right).
\end{align*}
Thus,
\begin{align*}
\left|\frac{1}{m+1}\sum_{i=0}^mw_{n+i}-w\right|&<C\left|\frac{1}{m+1}\sum_{i=0}^mx_{n+i}-x+\frac{1}{m+1}\sum_{i=0}^my_{n+i}-y\right. \\&+\left.\frac{1}{m+1}\sum_{i=0}^mz_{n+i}-z\right|,
\end{align*}
hence, taking norms
\begin{align*}
\left\|\frac{1}{m+1}\sum_{i=0}^mw_{n+i}-w\right\|&<C\left[\left\|\frac{1}{m+1}\sum_{i=0}^mx_{n+i}-x\right\|+\left\|\frac{1}{m+1}\sum_{i=0}^my_{n+i}-y\right\|\right. \\
&+\left.\left\|\frac{1}{m+1}\sum_{i=0}^mz_{n+i}-z\right\|\right].
\end{align*}
Letting $m\to \infty$ we get
$$
\left\|\frac{1}{m+1}\sum_{i=0}^mw_{n+i}-w\right\|\to 0
$$
uniformly in $n$, therefore $w_n\overset{\mathcal{R}}{\longrightarrow}w$ as we desired.
\end{proof}
\begin{corollary}
 Korovkin's statements continue being true if we incorporate the almost summability.
\end{corollary}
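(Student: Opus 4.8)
The plan is to deduce this corollary directly from the Theorem immediately above, which shows that almost summability preserves order-inequalities, together with the two main Korovkin-type theorems of Section~\ref{main}. First I would record that the two ambient spaces occurring in Korovkin's Statements~I and~II, namely $\mathcal{C}[0,1]$ and $\mathcal{C}_{2\pi}(\mathbb{R})$, are Banach lattices for the pointwise order whose supremum norm is a lattice norm. Consequently the hypotheses of the preceding Theorem are met once one specializes the abstract Banach lattice $(X,<)$ to each of these two spaces in turn.

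Next I would apply that Theorem to conclude that the almost summability method $\mathcal{R}$ preserves order-inequalities on $\mathcal{C}[0,1]$ and on $\mathcal{C}_{2\pi}(\mathbb{R})$. Since the first main theorem of Section~\ref{main} establishes the equivalence of conditions (i) and (ii) of Statement~I for \emph{every} summability method that preserves order-inequalities, it applies to $\mathcal{R}$ and yields Statement~I; the analogous theorem for the $2\pi$-periodic case yields Statement~II in the same manner. No separate verification of the ``preserves inequalities'' branch is needed, because order-inequality preservation already discharges the disjunctive hypothesis of those theorems.

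The only point that deserves a word of care is the domain bookkeeping: membership of the target sequence $(L_n f)$ in the domain $\mathcal{D}_{\mathcal{R}}$ of almost convergence is not assumed beforehand but is produced together with its limit, exactly as in the proof of the preceding Theorem, where one shows directly that $\left\|\frac{1}{m+1}\sum_{i=0}^m (L_{n+i}f) - f\right\|$ tends to $0$ uniformly in $n$. I therefore do not anticipate any genuine obstacle: the corollary is a formal consequence of the order-inequality preservation of almost summability combined with the general Korovkin theorems, and it requires no further computation.
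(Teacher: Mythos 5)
Your proposal is correct and follows exactly the route the paper intends: the corollary is an immediate consequence of the preceding theorem (almost summability preserves order-inequalities on the Banach lattices $\mathcal{C}[0,1]$ and $\mathcal{C}_{2\pi}(\mathbb{R})$) combined with the two general Korovkin-type theorems of Section \ref{main}, whose disjunctive hypothesis is discharged by the order-inequality branch. Your extra remark on domain bookkeeping is a sensible precaution but introduces nothing beyond what the paper's argument already supplies.
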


Let $A=(\alpha_{ij})_{(i,j)\in \mathbb{N}\times\mathbb{N}}$ be a matrix with  non-negative entries.   A sequence $(x_i)$ in a Banach lattice $(X,<)$ is said to be $A$-summable (or $A$-convergent) to $L\in X$, if $\lim_n \sum_j \alpha_{nj} x_j=L$.  
A non-negative matrix $A$ is said to be regular if
\begin{enumerate}
    \item $\sup_n\sum_j\alpha_{nj}<\infty$.
    \item $\lim_n \alpha_{ni}=0$.
    \item $\lim_n\sum_i\alpha_{ni}=1$.
\end{enumerate}

\begin{theorem}
Let $(X,<)$ be a Banach lattice with the lattice norm $\|\cdot\|$. Let $A$ be a regular matrix with non-negative entries. The $A$-convergence   preserves order-inequalities.
\end{theorem}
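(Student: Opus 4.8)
The plan is to apply the non-negative matrix $A$ termwise to the order-inequality (\ref{desi}) and then convert the resulting order control into norm control by means of the lattice norm, using the regularity of $A$ to normalise. I begin from the hypotheses of the definition: sequences $(w_j),(x_j),(y_j),(z_j)\in\mathcal{D}_{\mathcal{R}}$ satisfy, for every $j$,
\[
-C\left[(x_j-x)+(y_j-y)+(z_j-z)\right]<w_j-w<C\left[(x_j-x)+(y_j-y)+(z_j-z)\right],
\]
together with $x_j\overset{\mathcal{R}}{\longrightarrow}x$, $y_j\overset{\mathcal{R}}{\longrightarrow}y$, $z_j\overset{\mathcal{R}}{\longrightarrow}z$, which for the $A$-convergence means $\sum_j\alpha_{nj}x_j\to x$, and analogously for $y$ and $z$, as $n\to\infty$.

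First I would multiply each inequality by the scalar $\alpha_{nj}\ge 0$, which preserves the lattice order, and sum over $j$. Since the positive cone of a Banach lattice is norm-closed, the (possibly non-strict) inequality survives the passage to the norm-convergent row sums, yielding, with $S_n:=\sum_j\alpha_{nj}[(x_j-x)+(y_j-y)+(z_j-z)]$ and $u_n:=\sum_j\alpha_{nj}(w_j-w)$,
\[
-C\,S_n\le u_n\le C\,S_n .
\]
A two-sided order bound of this form forces $|u_n|\le C\,S_n$, and the defining monotonicity of the lattice norm then gives $\|u_n\|\le C\,\|S_n\|$.

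Next I would show $S_n\to 0$ in norm; this is where regularity is used. Splitting the $x$-contribution as $\sum_j\alpha_{nj}(x_j-x)=\sum_j\alpha_{nj}x_j-\big(\sum_j\alpha_{nj}\big)x$, the first term tends to $x$ by hypothesis, while regularity condition (3), $\sum_j\alpha_{nj}\to 1$, makes the second tend to $x$ as well, so the difference tends to $0$; the same holds for the $y$- and $z$-terms. Hence $\|u_n\|\le C\|S_n\|\to 0$. Finally, writing $u_n=\sum_j\alpha_{nj}w_j-\big(\sum_j\alpha_{nj}\big)w$ and invoking $\sum_j\alpha_{nj}\to 1$ once more yields $\sum_j\alpha_{nj}w_j\to w$, i.e.\ $w_j\overset{\mathcal{R}}{\longrightarrow}w$, as required.

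I expect the main obstacle to be the passage from the termwise order inequality to the summed one: it genuinely uses both the non-negativity of the entries (so that order is preserved term by term) and the closedness of the positive cone (so that the order inequality is inherited by the norm limit of the partial sums). Once this is in place, the conversion $|u_n|\le C S_n\Rightarrow\|u_n\|\le C\|S_n\|$ through the lattice norm and the two normalisations by $\sum_j\alpha_{nj}\to1$ are routine; regularity condition (1), $\sup_n\sum_j\alpha_{nj}<\infty$, is what ensures all the row sums converge and remain controlled so that these manipulations are legitimate.
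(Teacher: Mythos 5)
Your proposal is correct and follows essentially the same route as the paper: apply the non-negative matrix rows to the termwise order-inequality, use the lattice norm to turn the resulting two-sided order bound into the norm bound $\|u_n\|\leq C\|S_n\|$, and then use the regularity condition $\sum_j\alpha_{nj}\to 1$ to replace $\bigl(\sum_j\alpha_{nj}\bigr)e$ by $e$ on both sides. The only difference is that you make explicit the passage to the infinite row sums via the norm-closedness of the positive cone, a point the paper leaves implicit.
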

\begin{proof}
Indeed, assume that there exist sequences $(w_i),(x_i),(y_i)$ and $(z_i)$ satisfying
\begin{align*}
-C\left((x_i-x)+(y_i-y)+(z_i-z)\right) &<w_i-w< \\
&< C\left((x_i-x)+(y_i-y)+(z_i-z)\right).
\end{align*}
Since $e_n\overset{A}{\longrightarrow}e$, for $e_j=x_j,y_j,z_j$, $e=x,y,z$ and $\lim_n\sum_i\alpha_{ni}=1$ we get:
$$
\left\|\sum_j \alpha_{nj} e_j-\sum_{j}a_{nj}e\right\|\leq 
\left\|\sum_j \alpha_{nj} e_j-e\right\|+\left|1-\sum_{j}a_{nj}\right| \|e\|\to 0
$$
as $n\to\infty$ for $e=x,y,z$.

Since the entries of $A$ are non-negative, for every $n$ we get
\begin{align*}
\left|\sum_j \alpha_{nj} w_j-\left(\sum_j \alpha_{nj}\right)w \right|&\leq C\left| \sum_j \alpha_{nj} x_j-\left(\sum_{j}a_{nj}\right)x\right.\\
&+\sum_j \alpha_{nj} y_j-\left(\sum_{j}a_{nj}\right)y\\
&\left.+\sum_j \alpha_{nj} z_j-\left(\sum_j\alpha_{nj}\right)z\right| .
\end{align*}
Taking norms and applying the triangular inequality we get that
$$
\lim_{n\to \infty }\left\|\sum_j \alpha_{nj} w_j-\left(\sum_{j}\alpha_{nj}\right)w\right\|=0.
$$
Hence, since
$$
\left\|\sum_j \alpha_{nj} w_j-w\right\|\leq 
\left|\sum_j \alpha_{nj} -1\right|\|w\|+\left\|\sum_j \alpha_{nj} w_j-\left(\sum_{j}\alpha_{nj}\right)w\right\|,
$$
and each summand in the right hand  converges in norm to zero as $n$ tends to $\infty$, we get that $w_n\overset{A}{\longrightarrow}w$
as we desired.
\end{proof}
\begin{corollary}
 Korovkin's statements remain true if we incorporate a regular matrix summability method with non-negative entries.
\end{corollary}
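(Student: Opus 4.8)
The plan is to obtain this corollary as an immediate specialization of the abstract Korovkin theorems of Section~\ref{main}, using the theorem immediately preceding it as the single substantive input. First I would observe that both $\mathcal{C}[0,1]$ and $\mathcal{C}_{2\pi}(\mathbb{R})$, endowed with the supremum norm and the pointwise order, are Banach lattices with lattice norm. Hence the preceding theorem applies to each of them and guarantees that, for any regular matrix $A$ with non-negative entries, the associated $A$-convergence preserves order-inequalities on these two spaces.

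With this in hand, the two Korovkin statements follow by a direct appeal to the general results. The first Theorem of Section~\ref{main} yields Korovkin's Statement~I on $\mathcal{C}[0,1]$, and the second Theorem yields Korovkin's Statement~II on $\mathcal{C}_{2\pi}(\mathbb{R})$. Each of these theorems concludes the Korovkin equivalence under the hypothesis that $\mathcal{R}$ preserves inequalities \emph{or} order-inequalities; since $A$-convergence satisfies the order-inequality condition, both equivalences transfer verbatim once $\mathcal{R}$ is taken to be the $A$-convergence method.

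There is no genuine obstacle here, since the corollary is purely a matter of assembling results already established. The one point deserving a brief check is that $A$-convergence qualifies as a summability method in the precise sense demanded by the main theorems, namely that its limit assignment is independent of finitely many initial terms. This is exactly what condition (2) in the definition of a regular matrix supplies: because $\lim_n \alpha_{ni}=0$ for each fixed $i$, altering finitely many entries of a sequence leaves its $A$-limit unchanged, so the compatibility requirement on $\mathcal{R}$ is automatically met. Once this verification is noted, the corollary is immediate.
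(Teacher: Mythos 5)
Your proposal is correct and follows exactly the route the paper intends: the corollary is stated without proof as an immediate consequence of the preceding theorem (that $A$-convergence for a regular non-negative matrix preserves order-inequalities on a Banach lattice) combined with the two general Korovkin theorems of Section~\ref{main}. Your additional check that the $A$-limit is insensitive to finitely many initial terms, via $\lim_n\alpha_{ni}=0$, is a sensible verification of the standing hypothesis on $\mathcal{R}$ that the paper leaves implicit.
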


\section{Concluding remarks and open questions}

Korovkin-type approximation theorems have been successfully obtained for functions of two variables by and for different types of convergence \cite{sev1,sev2,sev3}, including summability methods on double sequences.

We believe that some of the structure discovered here in one variable remain true for functions of  two variables and for  summability method on double sequences. It will be interesting to obtain a result which could unify everything.

\begin{quote}
{\bf Question.} Does it is possible a unification of Korovkin-type approximation results in the several variables setting?
\end{quote}

In the search for new Korovkin type results, it would be interesting to obtain Korovkin's statement for a summability method that does not preserve inequalities or order-inequalities. That is, the following question arises:

\begin{quote}
{\bf Question. } Let $A$ be a matrix defining a non-necessarily regular summability method. Does is it possible a Korovkin-type result incorporating the $A$-summability? 
In general, for which matrices $A$ does is it possible to obtain a Korovkin-type result?
\end{quote}

{\bf Acknowledgements. } The authors were supported by by Ministerio de Ciencia, Innovaci\'on y Universidades under grant PGC2018-101514-B-I00.

\section*{Availability of data and material}
Not applicable.

\section*{Acknowledgements}
We want to thank to the Vicerretorado de Investigación of University of Cádiz who supported  partially this research.

\section*{Author's contribution}
MCLG contributed significantly to analysis and manuscript preparation; PR helped perform the analysis with constructive discussions. All authors read and approved the final manuscript.

\section*{Compliance with Ethical Standards}
The author Maria del Carmen Listán García declares that she has no conflict of interest.
The author Maria del Romero de la Rosa declares that she has no conflict of interest.
\section*{Funding}

 The authors were supported by by Ministerio de Ciencia, Innovaci\'on y Universidades under grant PGC2018-101514-B-I00.

%\bibliographystyle{spmpsci}      % mathematics and physical sciences
%\bibliographystyle{spphys}       % APS-like style for physics
%\bibliography{}   % name your BibTeX data base

\end{document}